\newcommand{\Hom}{\operatorname{Hom}\nolimits}
\newcommand{\Tor}{\operatorname{Tor}\nolimits}
\newcommand{\Ext}{\operatorname{Ext}\nolimits}
\newcommand{\n}{\operatorname{\mathfrak{n}}\nolimits}
\newcommand{\V}{\operatorname{V}\nolimits}
\newcommand{\W}{\operatorname{W}\nolimits}
\newcommand{\G}{\operatorname{G}\nolimits}
\newcommand{\T}{\operatorname{\mathcal{T}}\nolimits}
\newcommand{\comments}[1]{}
\newtheorem{theorem}{Theorem}[section]
\newtheorem{lemma}[theorem]{Lemma}
\newtheorem{proposition}[theorem]{Proposition}
\theoremstyle{definition}
\newtheorem{definition}[theorem]{Definition}
\theoremstyle{definition}
\theoremstyle{definition}
\theoremstyle{definition}
\newtheorem*{example}{Example}
\theoremstyle{definition}
\theoremstyle{definition}
\theoremstyle{definition}
\newtheorem*{remark}{Remark}
\theoremstyle{definition}
\begin{document}

\title[higher-order support varieties]{Realizability and the Avrunin-Scott theorem for higher-order support varieties}
\author{Petter Andreas Bergh \& David A.\ Jorgensen}
\address{Petter Andreas Bergh \\ Institutt for matematiske fag \\
  NTNU \\ N-7491 Trondheim \\ Norway}
\email{bergh@math.ntnu.no}
\address{David A.\ Jorgensen \\ Department of mathematics \\ University
of Texas at Arlington \\ Arlington \\ TX 76019 \\ USA}
\email{djorgens@uta.edu}


\begin{abstract}
We introduce higher-order support varieties for pairs of modules over a commutative local complete intersection ring, and give a complete description of which varieties occur as such support varieties.  In the context of a group algebra of a finite elementary abelian group, we also prove a higher-order Avrunin-Scott-type theorem, linking higher-order support varieties and higher-order rank varieties for pairs of modules.
\end{abstract}

\subjclass[2010]{13D02, 13D07, 16S34, 20C05}

\keywords{Complete intersections, support varieties}

\thanks{Part of this work was done while we were visiting the Mittag-Leffler Institute in February and March 2015. We would like to thank the organizers of the Representation Theory program.}

\maketitle

\section{Introduction}\label{sec:intro}

Support varieties for modules over commutative local complete intersections were introduced in \cite{Avramov} and \cite{AvramovBuchweitz}, inspired by the cohomological varieties of modules over group algebras of finite groups. These geometric invariants encode several homological properties of the modules. For example, the dimension of the variety of a module equals its complexity. In particular, a module has finite projective dimension if and only if its support variety is trivial. 

In this paper, we define higher-order support varieties for pairs of modules over complete intersections. These varieties are defined in terms of Grassmann varieties of subspaces of the canonical vector space associated to the defining regular sequence of the complete intersection. Thus, for a fixed dimension $d$, the support varieties of order $d$ are subsets of the Grassmann variety of $d$-dimensional subspaces of the canonical vector space, under a Pl\" ucker embedding into
$\mathbb P^{{c \choose d}-1}$. For $d=1$, we recover the classical support varieties: the varieties of order $1$ are precisely the projectivizations of the support varieties defined in \cite{AvramovBuchweitz}.

We show that several of the results that hold for classical support varieties also hold for the higher-order varieties. Among these is the realizability result: we give a complete description of the closed subsets of the Grassmann variety that occur as higher-order support varieties. We also prove a higher-order Avrunin-Scott result for group algebras of finite elementary abelian groups. Namely, we extend the notion of
$r$-rank varieties from \cite{CarlsonFriedlanderPevtsova} to higher-order rank varieties of pairs of modules and show that these varieties are isomorphic to the higher-order support varieties.

In Section 2 we give our definition of higher-order support varieties, and prove some of their elementary properties.  In particular, we show that they are well-defined, independent of the choice of corresponding intermediate complete intersection, and are in fact closed subsets of the Grassmann variety. In Section 3 we discuss the realizability question, and in Section 4 we prove the higher-order Avrunin-Scott result.

\section{Higher-order support varieties}\label{sec:hdsv}

In this section and the next, we fix a regular local ring $(Q, \n, k)$ and an ideal $I$ generated by a regular sequence of length $c$ contained in $\n^2$. We denote by $R$ the complete intersection ring 
$$R = Q/I,$$
and by $V$ the $k$-vector space 
$$V=I/\n I.$$
For an element $f\in I$, we let $\overline f$ denote its image in $V$.

If the codimension of the complete intersection $R=Q/I$ is at least 2, then $V$ has dimension at least 2, and it makes sense to consider subspaces $W$ of $V$.  Each such subspace has many corresponding complete intersections, in the following sense: if $W$ is a subspace of $V$, then choosing preimages in $I$ of a basis of $W$ we obtain another regular sequence \cite[Theorem 2.1.2(c,d)]{BrunsHerzog}, and the ideal 
$J\subseteq I$ it generates.  We thus get natural projections of complete intersections $Q\to Q/J\to R$. We call $Q/J$ a \emph{complete intersection intermediate to $Q$ and $R$}, or when the context is clear, simply an 
\emph{intermediate complete intersection}. 

We now give our definition of higher-order support variety. We fix a basis of $V$, and let $\G_d(V)$ denote the Grassmann variety of $d$th order subspaces of $V$ under the Pl\" ucker embedding into
$\mathbb P^{{c \choose d}-1}$ with respect to the chosen basis of $V$.

\begin{definition} We set
\[
V_R^d(M,N)=\{p_W\in\G_d(V)\mid \Ext_{Q/J}^i(M,N)\ne 0 \text{ for infinitely many $i$}\},
\]
where $W$ is a $d$th order subspace of $V$, $p_W$ is the corresponding point in the Grassmann variety $\G_d(V)$, and $Q/J$ is an intermediate complete intersection corresponding to $W$. 
We also define $\V_R^d(M)=\V_R^d(M,k)$.
\end{definition}

\begin{remark}
We note that $\V_R^1(M,N)$ is the projectivization of the affine support variety $\V_R(M,N)$
defined in \cite{AvramovBuchweitz}. 
\end{remark}

There are two aspects of the definition which warrant further discussion.
\begin{enumerate}
\item \label{independent}
The definition is independent of the chosen intermediate complete intersection $Q/J$ 
corresponding to $W$, and
\item \label{closed} $\V_R^d(M,N)$ is a closed set in $G_d(V)$.
\end{enumerate}
We next give proofs of these two statements.  

Let $Q/J$ and $Q/J'$ be two complete intersections intermediate to $Q$ and $R$.  The condition that  
\[
(J+\n I)/\n I=(J'+\n I)/\n I
\] 
in $V$ defines an equivalence relation on the set of such intermediate complete intersections.  The following result addresses (\ref{independent}) above.

\begin{proposition} Suppose that $Q/J$ and $Q/J'$ are equivalent complete intersections intermediate to $Q$ and $R$, that is, $(J+\n I)/\n I=(J'+\n I)/\n I$ in $V$.  Then for all finitely generated 
$R$-modules $M$ and $N$ one has
$\Ext_{Q/J}^i(M,N)=0$ for all $i\gg 0$ if and only if $\Ext_{Q/J'}^i(M,N)=0$ for all $i\gg 0$.
\end{proposition}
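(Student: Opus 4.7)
The approach is to reformulate the asymptotic vanishing $\Ext_{Q/J}^i(M,N) = 0$ for $i \gg 0$ as a condition that manifestly depends only on the subspace $W = (J+\n I)/\n I$ of $V$, so that the hypothesis $W = W'$ immediately implies the conclusion.

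By the theorem of Avramov--Buchweitz \cite{AvramovBuchweitz} applied to the complete intersection $Q/J$, the condition $\Ext_{Q/J}^i(M,N) = 0$ for $i \gg 0$ is equivalent to the triviality of the affine support variety $\V_{Q/J}(M,N) \subseteq W$. Here the latter is the zero locus of the annihilator of $\Ext_{Q/J}^*(M,N)$ under the action of the polynomial algebra generated by the Eisenbud cohomology operators corresponding to a basis of $W$ lifted to a regular sequence of generators of $J$.

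The key step is the comparison
\[
\V_{Q/J}(M,N) = \V_R(M,N) \cap W,
\]
where $W$ is viewed as a linear subspace of $V$. This follows from the functoriality of the Eisenbud cohomology operators under the change of rings $Q/J \to R$: extending a basis of $W$ to a basis of $V$ (and correspondingly extending the regular sequence generating $J$ to one generating $I$), the operators on $\Ext_{Q/J}^*(M,N)$ coming from generators of $J$ agree, under the natural map to $\Ext_R^*(M,N)$, with the restrictions of the corresponding operators over $R$. Consequently the annihilator ideals match compatibly, producing the stated intersection formula.

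Combining the two steps, $\Ext_{Q/J}^i(M,N) = 0$ for $i \gg 0$ becomes $\V_R(M,N) \cap W = \{0\}$, a condition depending only on $W$ (and the fixed $\V_R(M,N)$). Since $W = W'$ by hypothesis, the analogous condition for $J'$ coincides with that for $J$, proving the proposition. The main obstacle is the comparison $\V_{Q/J}(M,N) = \V_R(M,N) \cap W$, which requires careful analysis of the Eisenbud operators under the change of rings $Q/J \to R$. This is most naturally done by comparing minimal $Q$-, $Q/J$-, and $R$-free resolutions of $M$ via Shamash's construction, tracking how the homotopies for the regular sequence generating $I/J$ produce the additional cohomology operators on $\Ext_R^*(M,N)$, and verifying that the restriction of these operators to the subalgebra indexed by $W$ recovers the operators defining $\V_{Q/J}(M,N)$.
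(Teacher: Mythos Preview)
Your proposal is correct and follows essentially the same approach as the paper: both reduce the asymptotic $\Ext$-vanishing over $Q/J$ to the triviality of the affine support variety $\V_{Q/J}(M,N)$ via \cite{AvramovBuchweitz}, and then argue that this variety, once transported into $V$, depends only on the subspace $W=(J+\n I)/\n I$. The only difference is in how the key comparison is handled: the paper invokes \cite[Proposition 3.2]{BerghJorgensen} to obtain $\varphi_J(\V_{Q/J}(M,N))=\varphi_{J'}(\V_{Q/J'}(M,N))$ as a black box, whereas you state the sharper intersection formula $\varphi_J(\V_{Q/J}(M,N))=\V_R(M,N)\cap W$ and sketch its proof via compatibility of the Eisenbud operators under the change of rings $Q/J\to R$. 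Your formula is in fact the content behind the cited result, so the two arguments are equivalent; your version has the advantage of making explicit \emph{what} the common image is, at the cost of having to justify the intersection formula yourself rather than citing it.
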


\begin{proof} Let $W=(J+\n I)/\n I$ and consider the natural map of $k$-vector spaces 
$\varphi_{J}:J/\n J\to W\subseteq V$ 
defined by $f+\n J \mapsto f+\n I$. This is an isomorphism: it is onto by construction,  and one-to-one since $J\cap \n I=\n J$. The condition that $(J+\n I)/\n I=(J'+\n I)/\n I$ is equivalent to
$\varphi_J(J/\n J)=\varphi_{J'}(J'/\n J')$.  By \cite[Proposition 3.2]{BerghJorgensen}, one has the equality $\varphi_J(\V_{Q/J}(M,N))=\varphi_{J'}(\V_{Q/J'}(M,N))$, where $\V_{Q/J}(M,N)$ denotes the affine support variety of $M$ and $N$ over the complete intersection $Q/J$. By \cite[Proposition 2.4(1) and Theorem 2.5]{AvramovBuchweitz} one has that $\Ext_{Q/J}^i(M,N)=0$ for all $i\gg 0$ if and only if 
$\V_{Q/J}(M,N)=\{0\}$.  The same holds over $Q/J'$, and thus the result follows by the injectivity of 
$\varphi_{J}$.
\end{proof}

Next, we address the second point in the remark.

\begin{proposition}\label{proposition:closed}
For all finitely generated $R$-modules $M$ and $N$ one has that $\V_R^d(M,N)$ is a closed set in $G_d(V)$.
\end{proposition}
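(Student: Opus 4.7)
The plan is to reduce the problem to a question about the classical affine support variety $\V_R(M,N)$, and then apply a standard closedness argument for incidence conditions on the Grassmannian.

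First I would translate the cohomological condition defining $\V_R^d(M,N)$ into a purely geometric incidence condition on $V$. Fix $W \in \G_d(V)$ and a corresponding intermediate complete intersection $Q/J$. Combining \cite[Proposition 2.4(1) and Theorem 2.5]{AvramovBuchweitz}, $\Ext_{Q/J}^i(M,N) \ne 0$ for infinitely many $i$ if and only if $\V_{Q/J}(M,N) \ne \{0\}$. Moreover, \cite[Proposition 3.2]{BerghJorgensen}, already used in the preceding proof, identifies the image $\varphi_J(\V_{Q/J}(M,N))$ inside $V$ with $W \cap \V_R(M,N)$, where $\V_R(M,N)$ is the classical affine support variety over $R$. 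Since $\varphi_J$ is a $k$-linear isomorphism onto $W$, this yields the clean identification
\[
\V_R^d(M,N) = \{\, p_W \in \G_d(V) : W \cap \V_R(M,N) \ne \{0\}\,\}.
\]
As a consistency check, for $d=1$ a one-dimensional $W$ meets $\V_R(M,N)$ nontrivially precisely when $W \subseteq \V_R(M,N)$, so one recovers $\mathbb{P}(\V_R(M,N))$ and hence the preceding remark.

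The next step is closedness. The classical variety $\V_R(M,N)$ is a closed cone in $V$, so $\mathbb{P}(\V_R(M,N))$ is closed in $\mathbb{P}(V)$. Form the incidence subset
\[
Z = \bigl\{\, (p_W, [v]) \in \G_d(V) \times \mathbb{P}(V) : [v] \in \mathbb{P}(W) \cap \mathbb{P}(\V_R(M,N)) \,\bigr\}.
\]
It is closed: the condition $[v] \in \mathbb{P}(W)$ cuts out the projectivized tautological subbundle of the Grassmannian, and $[v] \in \mathbb{P}(\V_R(M,N))$ is closed by the preceding sentence. Since $\mathbb{P}(V)$ is proper, the projection $\G_d(V) \times \mathbb{P}(V) \to \G_d(V)$ is a closed map, and the image of $Z$ is exactly $\V_R^d(M,N)$.

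The main obstacle is the first step — namely securing the geometric identification $\varphi_J(\V_{Q/J}(M,N)) = W \cap \V_R(M,N)$ from the earlier work, which requires carefully tracking how the affine support variety of the intermediate complete intersection sits inside $V$. Once this reformulation is in place, closedness is a formal consequence of the properness of projective space.
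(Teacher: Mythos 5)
Your proposal follows essentially the same route as the paper: translate the cohomological condition into an incidence condition on $\G_d(V)\times \mathbb{P}(V)$, observe that the incidence locus is closed, and use properness of $\mathbb{P}(V)$ (elimination theory) to conclude that its projection to $\G_d(V)$ is closed. The paper's incidence correspondence $\Gamma=\{(p_W,x):x\in W\cap \V_R^1(M,N)\}$ is your $Z$ with $\G_1(V)=\mathbb{P}(V)$ and $\V_R^1(M,N)=\mathbb{P}(\V_R(M,N))$.

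One small caution on the first step: you cite \cite[Proposition~3.2]{BerghJorgensen} for the identification $\varphi_J(\V_{Q/J}(M,N))=W\cap\V_R(M,N)$. In this paper that reference is only invoked for the weaker statement $\varphi_J(\V_{Q/J}(M,N))=\varphi_{J'}(\V_{Q/J'}(M,N))$ (well-definedness). The paper's own proof sidesteps the full identification by arguing pointwise: for $x\in W$, pick $f\in J$ with $\overline f=x$; then $x\in\V_R^1(M,N)$ means $\Ext^i_{Q/(f)}(M,N)\ne 0$ infinitely often, and \cite[Proposition 2.4(1), Theorem 2.5]{AvramovBuchweitz} convert this to the nonvanishing condition over $Q/J$, which directly gives $\pi(\Gamma)=\V_R^d(M,N)$. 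Your stronger identification is in fact correct and can be extracted from the same Avramov–Buchweitz results, but as written you should either verify it or check that the cited proposition really asserts it; the paper's pointwise phrasing is the more self-contained route.
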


\begin{proof} This result follows from an incidence correspondence (see, for example, 
\cite[Example 6.14]{Harris}), as we now describe.
Set 
\[
\Gamma =\{(p_W,x)\in\G_d(V)\times \G_1(V) \mid x\in W\cap\V_R^1(M,N)\}.
\] 
Since $\Gamma$ is an incidence correspondence, it is a closed subset of the product space $\G_d(V)\times \G_1(V)$. We have the two natural projections
\[
\xymatrixrowsep{2pc}
\xymatrixcolsep{0pc}
\xymatrix
{
& \G_d(V)\times \G_1(V) \ar[dl]_\pi \ar[dr]^{\pi'}& \\
\G_d(V) & & \G_1(V)
}
\]
Now by classical results from elimination theory (see, for example, \cite[Theorem 14.1]{Eisenbud}), the image of $\Gamma$ under $\pi$ is closed in $\G_d(V)$. It suffices now to show that $\pi(\Gamma)=\V_R^d(M,N)$.  

We have $p_W\in\pi(\Gamma)$ if and only if
$x\in\V_R^1(M,N)$ for some $x\in W$. This is equivalent to $\Ext_{Q/(f)}^i(M,N)\ne 0$ for infinitely many $i$ and for some $f\in I$ 
with $\overline f=x\in W$. By \cite[Proposition 2.4(1) and Theorem 2.5]{AvramovBuchweitz}, this condition is the same as $\Ext_{Q/J}^i(M,N)\ne 0$ 
for infinitely many $i$.  By definition, this happens if and only if $p_W\in\V_R^d(M,N)$.
\end{proof}

\begin{remark}
(1) Let $\T=\{(p_W,x)\in \G_d(V)\times \G_1(V) \mid x\in W\}$. Then the map $\tau:\T \to G_d(V)$ given by
$\tau(p_W,x)=p_W$ is the tautological bundle over the Grassmann variety $\G_d(V)$.  For $\Gamma$ 
as in the proof of Proposition \ref{proposition:closed}, we have $\Gamma\subseteq\T$, and 
$\tau(\Gamma)=\V_R^d(M,N)$.  Thus $\V_R^d(M,N)$ may be interpreted as the image under the tautological bundle of the fiber of $\V_R^1(M,N)$ in $\T$.

(2) In the definition of $\V_R^d(M,N)$, a specific basis of $V$ was chosen.  We remark that the definition 
is independent of the choice of basis, in the sense that if another basis of $V$ is chosen, then the two higher-order support varieties are isomorphic.  Indeed, this is true for the first order affine varieties
$\V_R(M,N)$ by \cite[Remark 2.3]{AvramovBuchweitz}.  It then follows that the same is true for the 
projectivizations $\V_R^1(M,N)$, namely, there is an automorphism $\xi:\G_1(V) \to \G_1(V)$ such that 
if $\V_R^1(M,N)$ is the support variety with respect to the first basis, and $\V_R^1(M,N)'$ is the support variety with respect to the second, then $\xi(\V_R^1(M,N))=\V_R^1(M,N)'$. The general result for the higher-order support varieties follows from the incidence correspondence from the proof above.
\end{remark}

We now give basic properties of higher-order support varieties, akin to those of the one-dimensional affine support varieties.

\begin{theorem} \label{thm:props}
The following hold for finitely generated $R$-modules $M$ and $N$.
\begin{enumerate}
\item $\V_R^d(k)=\G_d(V)$. 
\item $\V_R^d(M,N)=\V_R^d(N,M)$.  For $d=1$, we moreover have $\V_R^1(M,N)=\V_R^1(M)\cap\V_R^1(N)$.
\item $\V_R^d(M,M)=\V_R^d(k,M)=\V_R^d(M)$.
\item If $M'$ is a syzygy of $M$ and $N'$ is a syzygy of $N$, then $\V_R^d(M,N)=\V_R^d(M',N')$. \label{syzygy}
\item If $0\to M_1\to M_2\to M_3\to 0$ and $0\to N_1\to N_2\to N_3\to 0$ are short exact sequences of finitely generated $R$-modules, then for $\{h,i,j\}=\{1,2,3\}$ there are inclusions
\[
\V_R^d(M_h,N)\subseteq\V_R^d(M_i,N)\cup\V_R^d(M_j,N);
\]
\[
\V_R^d(M,N_h)\subseteq\V_R^d(M,N_i)\cup\V_R^d(M,N_j).
\]
\item If $M$ is Cohen-Macaulay of codimension  $m$, then 
\[
\V_R^d(M)=\V_R^d(\Ext_R^m(M,R)).
\]
In particular, if $M$ is a maximal Cohen-Macaulay $R$-module, then
$\V_R^d(M)=\V_R^d(\Hom_R(M,R))$.
\item \label{regseq} If $x_1,\dots,x_d$ is an $M$-regular sequence, then 
\[
\V_R^d(M)=\V_R^d(M/(x_1,\dots,x_d)M).
\]
\end{enumerate}
\end{theorem}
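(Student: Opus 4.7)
The plan is to reduce the statement to a pointwise check over a fixed intermediate complete intersection, and then to treat that case by iterating a single-element argument based on a long exact sequence of Ext groups. Specifically, fix $p_W\in\G_d(V)$ with corresponding intermediate complete intersection $Q/J$; by definition, $p_W\in\V_R^d(M)$ (respectively $\V_R^d(M/(x_1,\dots,x_d)M)$) if and only if $\Ext_{Q/J}^i(M,k)$ (respectively $\Ext_{Q/J}^i(M/(x_1,\dots,x_d)M,k)$) is nonzero for infinitely many $i$. Thus it suffices to verify that these two eventual-vanishing conditions are equivalent over the fixed ring $Q/J$.

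Since regularity of a sequence on $M$ depends only on the module action, $x_1,\dots,x_d$ remains $M$-regular when $M$ is regarded as a $Q/J$-module; moreover, each $x_i$ lies in the maximal ideal of $R$ and therefore lifts to an element of the maximal ideal of $Q/J$. It is then enough to establish the single-element version: if $x$ is $M$-regular and its lift lies in the maximal ideal of $Q/J$, then $\Ext_{Q/J}^i(M,k)=0$ for $i\gg 0$ if and only if $\Ext_{Q/J}^i(M/xM,k)=0$ for $i\gg 0$. Iterating this $d$ times, first killing $x_1$ on $M$, then $x_2$ on $M/x_1 M$, and so on, yields the full claim.

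For the single-element statement, I would apply $\Hom_{Q/J}(-,k)$ to the short exact sequence
\[
0 \to M \xrightarrow{x} M \to M/xM \to 0.
\]
The key observation is that the induced endomorphism of $\Ext_{Q/J}^i(M,k)$ is multiplication by $x$, which is zero since $x$ annihilates $k$. The long exact sequence of Ext therefore decomposes into short exact sequences
\[
0 \to \Ext_{Q/J}^i(M,k) \to \Ext_{Q/J}^{i+1}(M/xM,k) \to \Ext_{Q/J}^{i+1}(M,k) \to 0,
\]
from which the equivalence of eventual vanishing of $\Ext_{Q/J}^i(M,k)$ and $\Ext_{Q/J}^i(M/xM,k)$ is immediate. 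I do not foresee any serious obstacle; once the definitions are unwound to a fixed intermediate $Q/J$, the argument reduces to this standard long exact sequence computation.
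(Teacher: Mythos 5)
Your proposal only addresses part (\ref{regseq}) of the theorem. Parts (1) through (6) --- the bulk of the statement --- are not discussed at all, so the proof as presented is substantially incomplete.

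For part (\ref{regseq}), what you have is correct: the pointwise reduction to a fixed intermediate complete intersection $Q/J$, the observation that a lift of $x$ to $Q/J$ lies in its maximal ideal and hence annihilates $k$, the resulting splitting of the long exact Ext sequence into short exact sequences
\[
0 \to \Ext_{Q/J}^i(M,k) \to \Ext_{Q/J}^{i+1}(M/xM,k) \to \Ext_{Q/J}^{i+1}(M,k) \to 0,
\]
and the iteration over $x_1,\dots,x_d$ are all sound. This is in fact a more elementary and self-contained argument for (\ref{regseq}) than the paper's, which handles that item by invoking a result from the literature in the $d=1$ case and then lifting to higher $d$.

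The lifting device is precisely what you are missing for the remaining parts. The paper proves (1)--(6) by combining (i) the known properties of the affine support varieties $\V_R(M,N)$ over complete intersections (due to Avramov--Buchweitz), which give the $d=1$ projectivized case at once, with (ii) the identity $\V_R^d(M,N) = \pi\big(\Gamma(\V_R^1(M,N))\big)$, where $\Gamma(X) = \{(p_W,x) \in \G_d(V)\times\G_1(V) \mid x \in W\cap X\}$ is the incidence correspondence from the proof of Proposition \ref{proposition:closed} and $\pi$ is projection to $\G_d(V)$. Each of (1)--(6) then follows from the first-order statement together with the facts that $\Gamma$ and $\pi$ preserve containments and unions. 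Your pointwise approach over $Q/J$ could in principle be made to work for several of these parts as well --- each would reduce to a known fact about $\Ext$ over the fixed complete intersection $Q/J$ --- but those arguments would still have to be supplied. As written, the proposal proves one item out of seven.
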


\begin{proof}  The proof of properties (1)--(7) for the affine one-dimensional support varieties
$\V_R(M,N)$ are given in \cite{AvramovBuchweitz} (see also \cite{BerghJorgensen}.)  Since $\V_R^1(M,N)$ 
is simply the projectivization of $\V_R(M,N)$, the same properties also hold for these varieties. Finally,
properties (1)--(7) for $d>1$ follow from the $d=1$ case, as we now indicate.  

For a subset $X$ of $\G_1(V)$, we let
\[
\Gamma(X)=\{(p_W,x)\in\G_d(V)\times\G_1(V)\mid x\in W\cap X\}.
\]  
The proofs make repeated use of the fact 
that $\V_R^d(M,N)=\pi(\Gamma(\V_R^1(M,N)))$, where $\pi$ is as in the proof of Proposition \ref{proposition:closed}.  For example, for (1) we have 
$\V_R^d(k)=\pi(\Gamma(\V_R^1(k)))=\pi(\G_1(V))=\G_d(V)$.

For (2), we use the fact that $\V_R^1(M,N)=\V_R^1(M)\cap\V_R^1(N)=\V_R^1(N,M)$.  Therefore
$\V_R^d(M,N)=\pi\left(\Gamma\left(\V_R^1(M,N)\right)\right)=\pi\left(\Gamma\left(\V_R^1(N,M)\right)\right)
=\V_R^d(N,M)$

\sloppy To prove (3), we use the equalities $\V_R^d(M,M)=\pi(\Gamma(\V_R^1(M,M)))=\pi(\Gamma(\V_R^1(k,M)))=\V_R^d(k,M)$. The 
remaining equality and (4) are proved similarly.

To prove (5), we use the fact that for subsets $X$ and $Y$ of $\G_1(V)$ one has
$\Gamma(X\cup Y)=\Gamma(X)\cup\Gamma(Y)$. (We also use the fact that $\pi$ preserves unions, and both $\pi$ and $\Gamma$ preserve containment.) Therefore 
\begin{align*}
\V_R^d(M_h,N)&=\pi(\Gamma(\V_R^1(M_h,N)))\\
&\subseteq\pi(\Gamma(\V_R^1(M_i,N)\cup\V_R^1(M_j,N)))\\
&=\pi(\Gamma(\V_R^1(M_i,N)))\cup\pi(\Gamma(V_R^1(M_j,N)))\\
&=\V_R^d(M_i,N)\cup\V_R^d(M_j,N).
\end{align*}

The proofs of (6) and (7) are analogous to the proofs of \cite[Theorem 5.6(10)]{AvramovBuchweitz} and 
\cite[7.4]{AvramovIyengar} (see also \cite[Theorem 2.2(7) and (8)]{BerghJorgensen}.)
\end{proof}

We can extend Proposition 2.4(1) of \cite{AvramovBuchweitz}, to a sort of generalized Dade's Lemma, in the projective context.

\begin{proposition}
Fix $1\le d\le c$.  Then $\Ext_R^i(M,N)=0$ for all $i\gg 0$ if and only if $\V_R^d(M,N)=\emptyset$.
\end{proposition}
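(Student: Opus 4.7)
The plan is to reduce to the $d=1$ case via the incidence correspondence from the proof of Proposition~\ref{proposition:closed}. Recall that this gave the identity $\V_R^d(M,N) = \pi(\Gamma)$, where
\[
\Gamma = \{(p_W, x) \in \G_d(V) \times \G_1(V) \mid x \in W \cap \V_R^1(M,N)\}.
\]
The $d=1$ case of the statement is already available: since $\V_R^1(M,N)$ is the projectivization of the affine support variety $\V_R(M,N)$, we have $\V_R^1(M,N) = \emptyset$ if and only if $\V_R(M,N) = \{0\}$, which by \cite[Proposition 2.4(1) and Theorem 2.5]{AvramovBuchweitz} is equivalent to $\Ext_R^i(M,N) = 0$ for $i \gg 0$. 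So the entire task is to show that $\V_R^d(M,N) = \emptyset$ if and only if $\V_R^1(M,N) = \emptyset$.

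One direction is immediate from the identity $\V_R^d(M,N) = \pi(\Gamma)$: if $\V_R^1(M,N) = \emptyset$, then $\Gamma = \emptyset$, and hence $\V_R^d(M,N) = \emptyset$. For the converse, I would argue contrapositively. Suppose $\V_R^1(M,N) \neq \emptyset$ and pick any $x \in \V_R^1(M,N)$. The point $x$ corresponds to a $1$-dimensional subspace of $V$, and because the hypothesis $d \le c = \dim_k V$ allows us to extend any $1$-dimensional subspace to a $d$-dimensional subspace, we may choose a $d$-dimensional subspace $W \subseteq V$ with $x \in W$. Then $(p_W, x) \in \Gamma$, and therefore $p_W = \pi(p_W, x) \in \V_R^d(M,N)$, so $\V_R^d(M,N) \neq \emptyset$.

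There is not really a hard step here; the argument is simply a book-keeping exercise built on the incidence correspondence and the $d=1$ case. The one place where the hypothesis is actually used is in the extension step of the reverse direction: it is precisely the assumption $d \le c$ that guarantees every line in $V_R^1(M,N)$ sits inside some $d$-plane, and thus contributes a point to $V_R^d(M,N)$. Without this, the equivalence would fail for trivial dimensional reasons.
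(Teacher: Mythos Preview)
Your proof is correct and follows essentially the same approach as the paper: both reduce to the $d=1$ case via the identity $\V_R^d(M,N)=\pi(\Gamma)$ from Proposition~\ref{proposition:closed}, invoke \cite[Proposition 2.4(1) and Theorem 2.5]{AvramovBuchweitz} for that case, and then observe that $\V_R^1(M,N)=\emptyset \Leftrightarrow \Gamma=\emptyset \Leftrightarrow \pi(\Gamma)=\emptyset$. Your version is slightly more explicit in spelling out why $\V_R^1(M,N)\neq\emptyset$ forces $\Gamma\neq\emptyset$ (via the extension of a line to a $d$-plane, using $d\le c$), which the paper leaves implicit.
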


\begin{proof} 
By \cite[Proposition 2.4(1) and Theorem 2.5]{AvramovBuchweitz}, $\Ext_R^i(M,N)=0$ for all $i\gg 0$ if and only if $\V_R^1(M,N)=\emptyset$. The latter holds if and only if $\Gamma=\Gamma(\V_R^1(M,N))=\emptyset$, which in turn holds if and only if $\V_R^d(M,N)=\pi(\Gamma)=\emptyset$, where $\Gamma$ and $\pi$ are from the proof of Proposition \ref{proposition:closed}.
\end{proof}

\section{Realizability}

In this section we give a complete description of which closed subsets of $\G_d(V)$ can possibly occur 
as the $d$th order support variety $\V_R^d(M,N)$ of a pair of finitely generated $R$-modules $(M,N)$.  The basis of the description is the following result in the first order case.

\begin{theorem}\label{dimensionone}
Every closed subset of $\G_1(V)$ is the support variety of some finitely generated $R$-module.  Specifically, if $Z$ is a closed subset of $\G_1(V)$, then there exists a finitely generated $R$-module $M$ such that
$Z=\V_R^1(M,k)$.
\end{theorem}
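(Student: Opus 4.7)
The plan is to reduce to the known realizability result for the classical affine support varieties over a complete intersection. Recall that $\V_R^1(M,k)$ is, by definition, the projectivization of the affine support variety $\V_R(M,k)\subseteq V$, and that $\V_R(M,k)=\V_R(M)\cap \V_R(k)=\V_R(M)$ since $\V_R(k)=V$. Thus realizing $Z\subseteq \G_1(V)=\mathbb{P}(V)$ as $\V_R^1(M,k)$ amounts to realizing its affine cone in $V$ as $\V_R(M)$.

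First I would form the affine cone
\[
C=\{v\in V\mid [v]\in Z\}\cup\{0\}\subseteq V.
\]
Since $Z$ is a closed subset of $\mathbb{P}(V)$, it is cut out by a homogeneous radical ideal in the coordinate ring of $V$, so $C$ is a Zariski-closed, conical subvariety of $V$. This is precisely the kind of subset that the classical realizability theorem for support varieties over complete intersections handles: by the work of Avramov--Iyengar (and in this generality Bergh--Jorgensen), every closed conical subset of $V$ arises as $\V_R(M)$ for some finitely generated $R$-module $M$. Applying that theorem to $C$ produces an $R$-module $M$ with $\V_R(M)=C$.

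Finally, I would unwind the definitions: the projectivization of $\V_R(M,k)=\V_R(M)=C$ in $\mathbb{P}(V)=\G_1(V)$ is exactly $Z$, by construction of $C$. Hence $\V_R^1(M,k)=Z$, as desired.

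The main (and essentially only) obstacle is the invocation of the realizability theorem for affine support varieties; once that is available, the passage to the projective setting of $\G_1(V)$ is purely formal, amounting to the observation that closed subsets of projective space are in bijection with closed cones in the corresponding affine space. I do not foresee any complication from the fact that the chosen basis of $V$ enters the Pl\"ucker embedding, since the realization statement is invariant under linear automorphisms of $V$.
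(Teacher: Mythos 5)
Your proof is correct and follows essentially the same route as the paper's: pass to the affine cone $C$ over $Z$, invoke the classical affine realizability theorem (the paper cites \cite{Bergh} for this) to produce $M$ with $\V_R(M)=C$, and then observe that the projectivization of $\V_R(M,k)=\V_R(M)$ is $Z$ by construction. The paper's proof is just a terser version of the same argument.
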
 

\begin{proof} This is well-known in the affine case, see, for example, \cite{Bergh}.  Since every closed set in 
$\G_1(V)$ is the projectivization of a cone in $V$, and $\V_R^1(M,N)$ is the projectivization of $\V_R(M,N)$, the result follows.
\end{proof}

The framework of the proof of Proposition \ref{proposition:closed} allows us to complete the description of realizable higher-order varieties.
Recall that $\pi$ denotes the projection map $\G_d(V)\times\G_1(V)\to\G_d(V)$.

\begin{theorem}\label{dimensiond} For a closed subset $Z$ of $\G_1(V)$, set 
\[
\Gamma(Z)=\{(p_W,x)\in\G_d(V)\times\G_1(V) \mid x\in W\cap Z\}.
\]
Let $Y$ be a closed subset of $\G_d(V)$.  Then $Y=\V_R^d(M,N)$ for a pair of finitely generated $R$-modules 
$(M,N)$ if and only if $Y=\pi(\Gamma(Z))$ for some closed subset $Z$ of $\G_1(V)$.
\end{theorem}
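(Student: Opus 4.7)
The plan is to prove both directions as direct consequences of Proposition \ref{proposition:closed} and Theorem \ref{dimensionone}; essentially the statement is a combination of the two.

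For the forward implication, I would simply invoke the identity established in the proof of Proposition \ref{proposition:closed}, namely that
\[
\V_R^d(M,N) \;=\; \pi(\Gamma(\V_R^1(M,N))).
\]
Thus if $Y=\V_R^d(M,N)$, then setting $Z=\V_R^1(M,N)$ (which is closed in $\G_1(V)$, being the projectivization of the affine support variety of \cite{AvramovBuchweitz}) immediately gives $Y=\pi(\Gamma(Z))$ of the required form.

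For the backward implication, suppose $Y=\pi(\Gamma(Z))$ for some closed subset $Z$ of $\G_1(V)$. Here I would use Theorem \ref{dimensionone}: there exists a finitely generated $R$-module $M$ such that $Z=\V_R^1(M,k)$. Taking $N=k$, I apply the identity from Proposition \ref{proposition:closed} again to obtain
\[
\V_R^d(M,k) \;=\; \pi(\Gamma(\V_R^1(M,k))) \;=\; \pi(\Gamma(Z)) \;=\; Y,
\]
which exhibits $Y$ as a higher-order support variety of the pair $(M,k)$.

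There is no real obstacle here: the theorem is essentially a repackaging of the incidence-correspondence description of $\V_R^d(M,N)$ from Proposition \ref{proposition:closed} with the first-order realizability result from Theorem \ref{dimensionone}. The only subtle point worth noting is that we can always take $N=k$, so the higher-order realizability question reduces to the (known) first-order realizability question.
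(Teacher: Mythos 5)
Your proof is correct and follows essentially the same route as the paper: both directions hinge on the identity $\V_R^d(M,N)=\pi(\Gamma(\V_R^1(M,N)))$ from the proof of Proposition \ref{proposition:closed}, combined with Theorem \ref{dimensionone} for the converse. Your explicit choice of $N=k$ in the backward direction is in fact slightly more precise than the paper's phrasing, which cites Theorem \ref{dimensionone} as producing a pair $(M,N)$ when the theorem literally produces $(M,k)$.
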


\begin{proof} Suppose that $Y=\V_R^d(M,N)$ for a pair of finitely generated $R$-modules $(M,N)$.  Then the proof of Proposition \ref{proposition:closed} shows that $Y=\pi(\Gamma(\V_R^1(M,N)))$.

Conversely, suppose that $Y=\pi(\Gamma(Z))$ for some closed subset $Z$ of $\G_1(V)$.  Then Theorem
\ref{dimensionone} shows that $Z=\V_R^1(M,N)$ for some pair of finitely generated $R$-modules $(M,N)$. 
Thus $Y=\pi(\Gamma(\V_R^1(M,N)))=\V_R^d(M,N)$, again from the proof of Proposition \ref{proposition:closed}.
\end{proof}

Theorem \ref{dimensiond} shows that, in contrast to first order support varieties, the realizability of varieties in $\G_d(V)$ for $d>1$ as $d$th order support varieties of a pair of finitely generated $R$-modules is more restrictive.  Indeed, consider a smallest nontrivial first order support variety $\V_R^1(M,N)$, namely, one consisting of a single point $x$.  Then $\V_R^d(M,N)$ consists of all $d$-dimensional planes in $V$ 
containing $x$.  Changing the basis of $V$ if necessary, we can assume that 
$x=(1,0,\dots,0)\in\G_1(V)$.  Then there is an obvious bijective correspondence between $d$-dimensional subspaces of $V$ containing $x$, and $(d-1)$-dimensional subspaces of a $(c-1)$-dimensional $k$-vector space. Thus $\dim\V_R^d(M,N)=\dim \G_{d-1}(k^{c-1})=(d-1)(c-d)$.  In particular, we have $\dim\V_R^{c-1}(M,N)=c-2$,
which is of codimension one in $\G_{c-1}(V)$, and this is when $\V_R^1(M,N)$ is nontrivially as small as possible.

The following example illustrates the previous discussion.

\begin{example} Let $k$ be a field (of arbitrary characteristic), and $Q=k[[x_1,\dots,x_c]]$.  Then $Q$ is a regular local ring with maximal ideal $\n=(x_1,\dots,x_c)$.  For $I=(x_1^2,\dots,x_c^2)$, the quotient ring
$R=Q/I$ is a codimension $c$ complete intersection.  Let $M=R/(x_1)$.  
Then it is not hard to show that relative to the basis 
$\overline {x_1^2},\dots,\overline {x_c^2}$ of $V=I/\n I$, the one-dimensional support variety of $M$ is 
$\V^1_R(M,k)=\{(1,0,\dots,0)\}$.  Thus we have $\dim\V_R^d(M,k)=(d-1)(c-d)$, for $1\le d\le c-1$.
\end{example}

\section{Higher-order rank varieties and a higher-order Avrunin-Scott Theorem} 

In this final section we consider complete intersections of a special form, namely, those which arise as the group algebra $kE$ of a finite elementary abelian $p$-group $E$, where $k$ has characteristic $p$. In this case one has 
\[
kE\cong k[x_1,\dots,x_c]/(x_1^p,\dots,x_c^p).
\] 
Note that by assigning $\deg x_i=1$ for $1\le i \le c$, the 
$k$-algebra $kE$ is standard-graded.  We let $kE_1$ denote the degree-one component of $kE$; this is a 
$k$-vector space of dimension $c$.  For any linear form 
$u$ of $kE_1$ one has $u^p=0$, and thus the subalgebra $k[u]$ of $kE$ generated by $u$ is isomorphic to 
$k[x]/(x^p)$ (for $x$ an indeterminate).  Since $k[u]$ is a principal ideal ring, every finitely generated 
$k[u]$-module is a direct sum of a free module and a torsion module. Recall from \cite{Carlson} that the \emph{rank variety} 
$\W_E(M)$ of a $kE$-module $M$ is the set of those linear forms $u\in kE_1$ such that the torsion part of $M$ as a $k[u]$-module is nonzero.  It was conjectured by Carlson \cite{Carlson} and subsequently proven by Avrunin and Scott \cite{AvruninScott} that the rank variety and the group cohomological support variety $\V_{kE}(M)$ of a $kE$-module agree. 

Recall that $I$ denotes the ideal $(x_1^p,\dots,x_c^p)$, and $V$ the $k$-vector space 
$I/\n I$, where $\n$ is the maximal ideal $(x_1,\dots,x_c)$.
We now want to show that the classical Avrunin-Scott theorem mentioned above is a special case of a more general result involving the higher-order varieties.  We generalize the definition of $d$th order rank varieties from 
\cite{CarlsonFriedlanderPevtsova} (which they call $d$-rank varieties) to $d$th order rank varieties
$\W^d_E(M,N)$ of pairs of modules $(M,N)$.  Fix a basis of $kE_1$, and consider the Grassmann variety 
$\G_d(kE_1)$ of $d$-dimensional subspaces of $kE_1$ under the Pl\" ucker embedding into 
$\mathbb P^{{c \choose d}-1}$ with respect to the chosen basis.

\begin{definition} We set 
\[
\W^d_E(M,N)=\{p_W\in \G_d(kE_1) \mid \Ext^i_{k[W]}(M,N)\ne 0 \text{ for infinitely many $i$}\}
\]
where $\G_d(kE_1)$ is the Grassmann variety of $d$-dimensional subspaces of $kE_1$, $p_W$ 
is the point in $\G_d(kE_1)$ corresponding to the $d$-dimensional subspace $W$, and $k[W]$ is the subalgebra of $kE$ generated by $W$.
\end{definition}

Consider the Frobenius isomorphism $\Phi:k \to k$ given by $\Phi(a)=a^p$. We have a $\Phi$-equivariant isomorphism of $k$-vector spaces 
\[
\alpha: kE_1 \to V
\] 
defined as follows.  For $u\in kE_1$, we 
choose a preimage $\widetilde u$ in $Q$, and then we set $\alpha(u)=\widetilde u^p+\n I\in V$.  
It is clear that $\alpha$ is a $\Phi$-equivariant homomorphism of $k$-vector spaces, which is defined
independent of the choice of preimage.  Since $k$ is algebraically closed, it contains $p$th roots, and
therefore $\alpha$ is onto.  Since $\dim kE_1=\dim V$, $\alpha$ is also one-to-one.

Taking as a basis for $V$ the image under $\alpha$ of the chosen basis of $kE_1$, we obtain an induced 
$\Phi$-equivariant isomorphism of Grassmann varieties
\[
\beta:\G_d(kE_1) \to \G_d(V)
\]
with respect to these bases.  Specifically, let $p_W$ be a point in 
$\G_d(kE_1)$, and $W$ the associated $d$-dimensional subspace of $kE_1$. Let $\widetilde W^p$ 
denote the ideal of $Q$ generated by the $p$th powers of linear preimages in $Q$ of a basis
of $W$.  Then $\beta(p_W)$ is the point in $\G_d(V)$ (with respect to the chosen basis of $V$) corresponding to the subspace $\widetilde W^p+\n I/\n I$.

\begin{theorem} \label{ASthm} Given finitely generated $kE$-modules $M$ and $N$, one has 
\[
\beta(\W_E^d(M,N))=\V^d_{kE}(M,N).
\]
\end{theorem}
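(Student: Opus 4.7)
The approach is to reduce the claim to the classical first-order Avrunin--Scott theorem by establishing incidence descriptions of both $\V_{kE}^d(M,N)$ and $\W_E^d(M,N)$ in terms of their first-order counterparts, and then observing that $\beta$, being induced by a linear map, preserves this incidence.

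On the support side, the proof of Proposition~\ref{proposition:closed} already gives $p_{W'} \in \V_{kE}^d(M,N)$ iff the $d$-plane $W' \subseteq V$ meets $\V_{kE}^1(M,N)$ as a subset of $\G_1(V)$. I would next establish the analogous statement on the rank side: $p_W \in \W_E^d(M,N)$ iff $W \subseteq kE_1$ contains a nonzero $u$ with $\langle u\rangle \in \W_E^1(M,N)$. The key observation is that the subalgebra $k[W]$ of $kE$ is isomorphic to the group algebra $kE_W$ of an elementary abelian $p$-group of rank $d$ whose degree-one component is exactly $W$. Applying Avrunin--Scott for pairs to $kE_W$ -- which follows from the classical single-module version for $kE_W$ combined with the intersection formulas $\V^1(M,N)=\V^1(M)\cap\V^1(N)$ (Theorem~\ref{thm:props}(2)) and its rank-variety analogue -- yields that $\Ext^i_{k[W]}(M,N) = 0$ for $i \gg 0$ iff $\Ext^i_{k[u]}(M,N) = 0$ for $i \gg 0$ for every nonzero $u \in W$, which is precisely the desired description.

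The first-order identity $\beta(\W_E^1(M,N)) = \V_{kE}^1(M,N)$ follows by the same combination of classical Avrunin--Scott and the intersection formulas, using that $\beta$ is a bijection. Since $\alpha : kE_1 \to V$ is linear, $\beta$ preserves the relation ``line contained in a $d$-plane'', and combining the incidence descriptions with the first-order identity gives
\begin{align*}
p_W \in \W_E^d(M,N)
&\iff W \cap \W_E^1(M,N) \ne \emptyset \\
&\iff \alpha(W) \cap \V_{kE}^1(M,N) \ne \emptyset \\
&\iff \beta(p_W) \in \V_{kE}^d(M,N),
\end{align*}
proving the theorem. The principal obstacle is the rank-side incidence description, which rests on establishing Avrunin--Scott for pairs for $kE_W$; once this is in hand, the remaining assembly is a formal consequence of the incidence correspondence developed in Section~2.
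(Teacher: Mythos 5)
Your proposal shares the paper's overall skeleton -- reduce to the first-order case via incidence correspondences on both the rank side and the support side -- but the middle ingredients differ. On the rank side, you derive the incidence statement $p_W\in\W_E^d(M,N)$ iff $W\cap\W_E^1(M,N)\ne\emptyset$ by invoking Avrunin--Scott for pairs over the smaller algebra $k[W]\cong kE_W$; the paper instead invokes Dade's Lemma for pairs (which follows from classical single-module Dade plus the Hopf structure of $kE_W$, via $\Hom_k(M,N)\cong M^*\otimes_k N$) and thus avoids any appeal to support-variety machinery at that step, which is more economical. For the first-order identity $\beta(\W_E^1(M,N))=\V_{kE}^1(M,N)$ you invoke ``classical Avrunin--Scott'' as a black box; this is not quite the 1982 theorem (which compares rank varieties with cohomological varieties, not CI support varieties), and the missing translation is exactly what the paper's Lemma supplies: finite projective dimension over $k[u]$ is equivalent to finite projective dimension over $Q/(\widetilde u^p)$, a fact extracted from Avramov's Theorem (7.5). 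So your argument is correct in outline but implicitly depends on that Lemma. One small slip: you justify that $\beta$ preserves line-in-plane incidence by saying $\alpha$ is linear, but $\alpha$ is only $\Phi$-equivariant (Frobenius semi-linear), $\alpha(au)=a^p\alpha(u)$; since $\Phi$ is an automorphism of the algebraically closed field $k$, $\alpha$ still carries $k$-subspaces bijectively to $k$-subspaces and the conclusion stands, but the justification should be rephrased.
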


The proof relies on the following lemma, which is a statement extracted from the proof of \cite[Theorem (7.5)]{Avramov}. For completeness we include the proof here. 

For any non-zero linear form $u\in kE_1$ we choose a preimage 
$\widetilde u$ in $Q=k[x_1,\dots,x_c]$, which is also a linear form, and define a 
homomorphism from $\mu :k[u]\to Q/(\widetilde u^p)$ by sending $u$ to $\widetilde u+(\widetilde u^p)$.  
Note that $Q/(\widetilde u^p)$ is free when regarded as module over $k[u]$ via $\mu$.  We have the commutative diagram of ring homomorphisms 
\[
\xymatrixrowsep{2pc}
\xymatrixcolsep{1.9pc}
\xymatrix{
 & Q/(\widetilde u^p) \ar[d]\\
k[u] \ar@{^{(}->}[r] \ar[ur]^\mu & kE 
}
\]
where the vertical map is the natural projection.  In particular, the action of $k[u]$ on a $kE$-module $M$ factors through $\mu$.

\begin{lemma} Let $M$ be a finitely generated $kE$-module.  Then $M$ has finite projective dimension over 
$k[u]$ if and only if it has finite projective dimension over $Q/(\widetilde u^p)$.
\end{lemma}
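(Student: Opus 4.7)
The plan is to reduce to an explicit coordinate description of $R := Q/(\widetilde u^p)$ and then handle the two implications separately, with the harder direction handled by a Koszul/double-complex computation of $\Tor^R_*(k,M)$. First I would apply a linear change of variables in $Q$ so that $\widetilde u = x_1$. This identifies $R$ with $k[u]\otimes_k k[x_2,\dots,x_c]$, where $k[u]\cong k[x_1]/(x_1^p)$, so $R$ is a polynomial ring over $k[u]$ and in particular a free $k[u]$-module; equivalently, $x_2,\dots,x_c$ is an $R$-regular sequence with $R/(x_2,\dots,x_c)\cong k[u]$.

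The forward direction is then immediate: any finite $R$-free resolution of $M$, restricted along $k[u]\hookrightarrow R$, becomes a finite $k[u]$-free resolution, because $R$ is $k[u]$-free. For the converse, since $k[u]\cong k[x]/(x^p)$ is self-injective local Artinian, $\pd_{k[u]} M < \infty$ forces $M$ to be $k[u]$-free. I would then compute $\Tor^R_*(k,M)$ using a double complex: horizontally, the Koszul complex $K_\bullet(x_2,\dots,x_c;R)$ resolves $k[u]$ over $R$, and vertically, the periodic $k[u]$-free resolution of $k$ resolves $k$ over $k[u]$. The total complex is an $R$-free resolution of $k$.

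Tensoring this total complex with $M$ over $R$ and running the spectral sequence in which vertical homology is taken first collapses on the first page, because $K_i(x_2,\dots,x_c;M)$ is a direct sum of copies of the $k[u]$-free module $M$, so $\Tor^{k[u]}_j(K_i(x_2,\dots,x_c;M),k) = 0$ for $j>0$ and equals $K_i(x_2,\dots,x_c;M/uM)$ for $j=0$. Hence $\Tor^R_n(k,M) \cong H_n(K_\bullet(x_2,\dots,x_c;M/uM))$, which vanishes for $n \ge c$ since the Koszul complex has length $c-1$. This gives $\pd_R M \le c-1 < \infty$. The principal obstacle is ensuring that the $k[u]$-freeness of $M$ propagates through the Koszul complex correctly, which is precisely the content of the collapse; once the structural identity $R \cong k[u][x_2,\dots,x_c]$ and this spectral-sequence collapse are in hand, everything else is formal.
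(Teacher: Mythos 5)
Your proof is correct, and for the hard (converse) direction it takes a genuinely different route from the paper. After the same normalization (observing that $R:=Q/(\widetilde u^p)$ is $k[u]$-free, which you make explicit via $R\cong k[u][x_2,\dots,x_c]$) and the same observation that finite projective dimension over the self-injective Artinian ring $k[u]$ means $k[u]$-free, the paper proceeds by a change-of-rings trick on a \emph{minimal} $R$-free resolution $F$ of $M$: since $F$ is also a $k[u]$-free resolution of the $k[u]$-free module $M$, the complex $F\otimes_{k[u]}k$ stays exact and is a minimal free resolution of $M/uM$ over $R\otimes_{k[u]}k\cong Q/(\widetilde u)$, which is regular of global dimension $c-1$; hence $F_c\otimes_{k[u]}k=0$ and so $F_c=0$, forcing $\pd_R M\le c-1$. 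You instead build an explicit $R$-free resolution of $k$ as the total complex of $K_\bullet(x_2,\dots,x_c;R)\otimes_{k[u]}P_\bullet$ (Koszul resolution of $k[u]$ over $R$ tensored with the periodic $k[u]$-resolution of $k$), tensor with $M$ over $R$, and collapse the resulting spectral sequence using $k[u]$-freeness of $M$ to get $\Tor^R_n(k,M)\cong \H_n\bigl(K_\bullet(x_2,\dots,x_c;M/uM)\bigr)$, which vanishes for $n\ge c$. Both arguments ultimately exploit the same two facts---that $R$ is $k[u]$-free and that $R/(u)$ is regular of dimension $c-1$---but the paper's version gets the bound $\pd_R M\le c-1$ in one line from global dimension, while yours recovers it from the length of the Koszul complex and is more hands-on. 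One small remark: the final step ``$\Tor^R_n(k,M)=0$ for $n\gg0\ \Rightarrow\ \pd_R M<\infty$'' needs $R$ to be local or for minimal resolutions to be available; this is fine here since $M$ is $\n$-torsion so one may localize at $\n$ (or work in the graded category), and the paper's use of a ``minimal free resolution of $M$ over $Q/(\widetilde u^p)$'' has the same implicit reliance.
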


\begin{proof} The proof follows part of that of \cite[Theorem (7.5)]{Avramov}. 
Suppose that $M$ has finite projective dimension over $Q/(\widetilde u^p)$.  
Since $Q/(\widetilde u^p)$ is free over $k[u]$ any free resolution of $M$ over 
$Q/(\widetilde u^p)$ is also one of  $M$ over $k[u]$. Thus $M$ has a finite free resolution over
$k[u]$.  

Conversely, suppose $M$ is free as a $k[u]$-module.  Let $F$ be a minimal free resolution of $M$ over 
$Q/(\widetilde u^p)$.  Since $F$ is also a free resolution of $M$ over 
$k[u]$ and $\Tor_i^{k[u]}(M,k)=0$ for all $i>0$, we see that $F\otimes_{k[u]}k$ is a minimal free resolution of $M \otimes_{k[u]}k$ over $Q/(\widetilde u^p)\otimes_{k[u]}k\cong Q/(\widetilde u)$.  Since 
$Q/(\widetilde u)$ is regular and $F\otimes_{k[u]}k$ is a minimal, we must have that $F_c\otimes_{k[u]}k=0$, and this implies $F_c=0$.  Thus $F$ is a finite free resolution, and so $M$ has finite projective dimension 
over $Q/(\widetilde u)$.
\end{proof}

We now give a proof of Theorem \ref{ASthm}.

\begin{proof} 
Suppose that $p_W\in\W_E^d(M,N)$.  Then by definition there exist infinitely many nonzero 
$\Ext^i_{k[W]}(M,N)$.  Therefore, by Dade's Lemma, there exist infinitely nonzero 
$\Ext_{k[u]}^i(M,N)$ for some linear form $u\in W$.  Thus both $M$ and $N$ have infinite projective dimension over $k[u]$.  Therefore, by the lemma, both $M$ and $N$ have infinite projective dimension over 
$Q/(\widetilde u^p)$, and so it follows from \cite[Proposition 5.12]{AvramovBuchweitz} that there exist infinitely many nonzero $\Ext^i_{Q/(\widetilde u^p)}(M,N)$. This implies that there exist infinitely many nonzero $\Ext^i_{Q/(\widetilde W^p)}(M,N)$, where $\widetilde W^p$ represents the ideal generated by the $p$th powers of linear preimages in $Q$ of a basis of $W$. This gives
$\beta(p_W)\in\V^d_{kE}(M,N)$.  

For the reverse containment we just retrace our steps, noting that any $f\in I$ is equivalent mod $\n I$ to 
an element of the form $a_1x_1^p+\cdots+a_cx_c^p=(\sqrt[p]{a_1}x_1+\cdots+\sqrt[p]{a_c}x_c)^p$, $a_i\in k$,
and hence it is clear how to employ the previous lemma.
\end{proof}

\end{document}